\newcommand{\Degree}{K}
\numberwithin{equation}{section}
\theoremstyle{plain}
\newtheorem{theorem}{Theorem}
\newtheorem{prop}[theorem]{Proposition}
\newtheorem{lemma}[theorem]{Lemma}
\newcommand{\Z}{\ensuremath{\mathbb{Z}}}
\newcommand{\Zd}{\ensuremath{\Z^d}}
\newcommand{\ee}{\ensuremath{\mathrm{e}}}
\begin{document}

\title{
The growth constants of lattice trees
\\
and lattice animals in high dimensions
}

\author{
Yuri Mej\'ia Miranda\thanks{
Department of Mathematics, University of British Columbia,
Vancouver, BC, Canada V6T 1Z2.  Email: {\tt yuri@math.ubc.ca}}
\, and
Gordon Slade\thanks{
Department of Mathematics, University of British Columbia,
Vancouver, BC, Canada V6T 1Z2.  Email: {\tt slade@math.ubc.ca}}
}

\date{December 10, 2010}

\maketitle

\begin{abstract}
We prove that the growth constants for nearest-neighbour lattice trees
and lattice (bond) animals on the integer lattice $\Zd$ are
asymptotic to $2d\ee$ as the dimension goes to infinity,
and that their critical one-point functions converge to $\ee$.
Similar results are obtained in dimensions $d>8$
in the limit of increasingly
spread-out models; in this case the result for the growth
constant is a special case of previous
results of M.~Penrose.
The proof is elementary, once we apply previous results of T.~Hara and G.~Slade
obtained using the lace expansion.
\end{abstract}

\section{The main result}

We define two different regular graphs with vertex set $\Zd$, as follows.
The \emph{nearest-neighbour} graph has edge set consisting
of pairs $\{x,y\}$ with $\|x-y\|_1=1$.
The \emph{spread-out} graph has edge set consisting
of pairs $\{x,y\}$ with $0<\|x-y\|_\infty\le L$, with $L \ge 1$ fixed.
These graphs have degrees $2d$ and $(2L+1)^d-1$,
respectively.  Often we
discuss both graphs simultaneously, and use $\Degree$ to denote the
degree in either case.
Also, we will write $\lim_{\Degree \to \infty}$ to simultaneously denote
the limit as $d \to \infty$ for the nearest-neighbour case,
and the limit as $L \to \infty$ for the spread-out case.

On either graph, a \emph{lattice animal} is a finite connected subgraph,
and a \emph{lattice tree} is a finite connected subgraph
without cycles.  These very natural combinatorial objects are also
fundamental in polymer science \cite{Jans00}.
We denote the number of lattice animals containing $n$ bonds
and containing the origin of $\Zd$ by $a_n$, and the number
of lattice trees containing $n$ bonds
and containing the origin of $\Zd$ by $t_n$.
Standard subadditivity arguments \cite{Klar67,Klei81} provide the existence
of the \emph{growth constants}
\begin{equation}
    \tau = \lim_{n \to \infty} t_n^{1/n},
    \quad\quad
    \alpha = \lim_{n \to \infty} a_n^{1/n}.
\end{equation}
The growth constants
of course depend on $d$, and for the spread-out model, also on $L$.
The \emph{one-point functions}
\begin{equation}
    g(z)=\sum_{n=0}^\infty t_nz^n \quad \text{and} \quad
    g^{(a)}(z)=\sum_{n=0}^\infty a_nz^n
\end{equation}
have radii of convergence $z_c=\tau^{-1}$ and
$z_c^{(a)}=\alpha^{-1}$, respectively.

We will
rely on a result obtained by Hara and Slade \cite{HS90b}
using the lace expansion, but we will not
need any details about the lace expansion in this paper.
It is shown in \cite{HS90b}
that $g(z_c)$ and $g^{(a)}(z_c^{(a)})$
are finite
(in fact, at most  $4$)
for the nearest-neighbour
model in sufficiently high dimensions, and for the spread-out model
in dimensions $d>8$ if $L$ is sufficiently large, and that,
in these two limits, $z_c$ and $z_c^{(a)}$ obey
the equations
\begin{equation}
\label{g1}
    \lim_{\Degree \to \infty} \Degree z_c g(z_c)
    =
    \lim_{\Degree \to \infty} \Degree z_c^{(a)} g^{(a)}(z_c^{(a)})
     = 1.
\end{equation}
This is discussed for the
nearest-neighbour model in \cite{Hara08} (see, in particular,
\cite[(1.31)]{Hara08}), and the same considerations apply for
the spread-out model.  In fact, much more is known \cite{Slad06}.

Our main result is the following theorem.
The asymptotic relation in its statement means that
the limit of the ratio of left- and right-hand sides is equal to $1$.

\begin{theorem}
\label{thm:1}
For the nearest neighbour model as $d \to \infty$,
and for the spread-out model in dimensions $d>8$ as $L \to \infty$,
\begin{equation}
\label{taualphalim}
    \tau \sim \Degree \ee \quad \text{and} \quad \alpha \sim \Degree \ee,
\end{equation}
and, in these same limits,
\begin{equation}
\label{elim}
    \lim_{\Degree \to \infty} g(z_c)
    = \lim_{\Degree \to \infty} g^{(a)}(z_c^{(a)})
    =\ee.
\end{equation}
\end{theorem}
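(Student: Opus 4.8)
The plan is to deduce Theorem~\ref{thm:1} from the single claim that $g(z_c)\to\ee$ and $g^{(a)}(z_c^{(a)})\to\ee$ as $\Degree\to\infty$. Indeed, \eqref{g1} then gives $z_c=(1+o(1))/(\Degree g(z_c))$, so $\tau=1/z_c\sim\Degree\ee$, and likewise $\alpha\sim\Degree\ee$; thus \eqref{taualphalim} follows from \eqref{elim}. The heart of the matter is therefore to compute these two one-point function limits, and for this I would rely on two elementary combinatorial facts about $t_n$ (with their analogues for $a_n$) together with \eqref{g1}. First, by Cayley's formula there are $(n+1)^{n-1}$ labelled trees on $\{0,1,\dots,n\}$; rooting such a tree at $0$, there are exactly $\Degree^n$ maps into $\Zd$ sending $0$ to the origin and adjacent vertices to adjacent sites (visit the vertices so that each non-root vertex follows its parent, and pick its image among the $\Degree$ neighbours of the parent's image). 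Since a lattice tree with $n$ bonds through the origin arises from exactly $n!$ such pairs with \emph{injective} map,
\[
    t_n\le\frac{(n+1)^{n-1}}{n!}\,\Degree^n .
\]
By Stirling the right side is $O\bigl((\Degree\ee)^n n^{-3/2}\bigr)$, so $g$ converges for $z<(\Degree\ee)^{-1}$ and hence $\Degree z_c\ge\ee^{-1}$. Second, for each fixed $n$, $t_n\Degree^{-n}\to(n+1)^{n-1}/n!$ as $\Degree\to\infty$, because among the $\Degree^n$ adjacency-respecting maps of a fixed $(n+1)$-vertex tree only a fraction $O(n^2/\Degree)$ fail to be injective (the images of any two vertices differ by a sum of at least two lattice steps, and so coincide with probability $O(1/\Degree)$). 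Finally, the standard concatenation argument behind \cite{Klar67,Klei81} gives $t_{m+n}\ge t_m t_n$, so $\tau=\sup_n t_n^{1/n}$ and in particular $\Degree z_c\le1$; the same concatenation gives $a_{m+n}\ge a_m a_n$, while obviously $a_n\ge t_n$, and the counting above also yields $a_n\Degree^{-n}\to(n+1)^{n-1}/n!$ for fixed $n$ (animals with cycles have at most $n$ vertices, hence are of lower order in $\Degree$).

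Granting the finiteness $g(z_c)\le4$ and the relation \eqref{g1} from \cite{HS90b}, I would finish with a subsequence argument. Pick a subsequence of $\Degree\to\infty$ along which $\Degree z_c\to c$ and $g(z_c)\to G$; this is possible since $\Degree z_c\in[\ee^{-1},1]$ and $G\in[1,4]$. From $g(z_c)\ge\sum_{n\le N}t_n z_c^n$, the second fact above, and then $N\to\infty$, we obtain $G\ge\sum_{n\ge0}\frac{(n+1)^{n-1}}{n!}c^n$; the series on the right has radius of convergence $\ee^{-1}$, so finiteness of $G$ forces $c\le\ee^{-1}$, hence $c=\ee^{-1}$. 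Now \eqref{g1} reads $cG=1$, so $G=\ee$. As the subsequence was arbitrary, $\Degree z_c\to\ee^{-1}$ and $g(z_c)\to\ee$, i.e.\ $\tau\sim\Degree\ee$. The value $\ee$ emerges simply as $1/\ee^{-1}$; the only place the constant $\ee^{-1}$ enters is through the radius of convergence of $\sum_n\frac{(n+1)^{n-1}}{n!}c^n$, which is $\ee^{-1}$ by Cayley's formula and Stirling.

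For lattice animals I would prove the analogue of the displayed bound on $t_n$. Decomposing an animal into a spanning tree $S$ together with its remaining bonds, each of which joins two vertices of $S$, and writing $\Lambda(S)$ for the number of non-tree bonds with both endpoints among the vertices of $S$,
\[
    g^{(a)}(z)\le\sum_{S}z^{|E(S)|}(1+z)^{\Lambda(S)} ,
\]
the sum over lattice trees $S$ through the origin. The crucial point is that in high dimensions almost no lattice tree has an available non-tree bond: such a bond would close a short cycle, and short cycles are rare. Expanding $(1+z)^{\Lambda(S)}$ and grouping by the number $j$ of extra bonds, the $j$-th term is the generating function of lattice animals carrying $j$ independent cycles, which for $z\le z_c$ is at most $g(z)$ times a $j$-fold product of ``loop weights'', a single loop weight being bounded by $\sum_{\ell\ge g_0}N_\ell\bigl(z_c g(z_c)\bigr)^{\ell}$, where $g_0\ge3$ is the girth and $N_\ell=\Degree^\ell\pi_\ell$ is the number of $\ell$-step closed walks from a fixed vertex ($\pi_\ell$ the return probability): an $\ell$-cycle carries $\ell$ bond factors $z$, is embedded in at most $N_\ell$ ways, and carries a factor $g(z)^\ell$ from the attached subtrees. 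Since $z_c g(z_c)=(1+o(1))\Degree^{-1}$ by \eqref{g1}, while $\sum_{\ell\ge g_0}\pi_\ell=O(\Degree^{-1})$ by transience of the high-dimensional walk, each loop weight is $o(1)$ as $\Degree\to\infty$, whence $g^{(a)}(z)\le(1+o_{\Degree}(1))\,g(z)$ uniformly on $[0,z_c]$. In particular $g^{(a)}$ too converges for $z<(\Degree\ee)^{-1}$, so $\Degree z_c^{(a)}\ge\ee^{-1}$, and combined with $a_n\ge t_n$ this gives $z_c^{(a)}=z_c$, i.e.\ $\alpha=\tau$ (so, a fortiori, $\alpha\sim\Degree\ee$). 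Running the subsequence argument once more with $(t_n,g,z_c)$ replaced by $(a_n,g^{(a)},z_c^{(a)})$ then yields $g^{(a)}(z_c^{(a)})\to\ee$, completing \eqref{elim} and \eqref{taualphalim}.

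The step I expect to be the real work is the animal estimate just sketched — controlling, uniformly in $n$, how many ways a high-dimensional lattice tree can be decorated with cycle-forming bonds, and showing that the resulting generating-function cost vanishes as $\Degree\to\infty$. This is precisely where the Hara--Slade input \eqref{g1} is indispensable: it is what keeps $z_c g(z_c)$ of order $\Degree^{-1}$ and hence makes each loop weight small.
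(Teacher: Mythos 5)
Your treatment of lattice trees is essentially sound and close in substance to the paper's: the Cayley-formula bound $n!\,t_n\le(n+1)^{n-1}\Degree^n$ is the same as the paper's mean-field bound $t_n\le f_n$ (with $f_n=\Degree^n(n+1)^{n-1}/n!$), your fixed-$n$ limit $t_n\Degree^{-n}\to(n+1)^{n-1}/n!$ is Lemma~\ref{lem:n}, and your subsequence/compactness argument (finiteness of $G$ forces $c\le\ee^{-1}$ because the mean-field series diverges beyond its radius of convergence $\ee^{-1}$, then $cG=1$ gives $G=\ee$) is a legitimate alternative to the paper's route via $\lim_{\Degree\to\infty}g(z_0)=\ee$ and monotonicity of $g$. (Minor point: $t_{m+n}\ge t_mt_n$ is not quite the standard concatenation inequality, but all you need for compactness is an upper bound on $\Degree z_c$, and $\Degree z_c=t_1z_c\le g(z_c)\le 4$ already provides one.)

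The lattice-animal half has a genuine gap, and it is not merely that the ``real work'' is left undone: the estimate you aim for is false. You claim $g^{(a)}(z)\le(1+o_\Degree(1))\,g(z)$ uniformly on $[0,z_c]$ and deduce $z_c^{(a)}=z_c$, i.e.\ $\alpha=\tau$. But the strict inequality $\tau<\alpha$ is known \cite{Jans00}, so $z_c^{(a)}<z_c$ and $g^{(a)}(z)=+\infty$ for all $z\in(z_c^{(a)},z_c]$, while $(1+o(1))g(z_c)\le(1+o(1))\cdot 4$ is finite; the claimed uniform bound therefore cannot hold on $[0,z_c]$. The failure is already visible in your loop weight: $\sum_{\ell\ge g_0}N_\ell(z_cg(z_c))^\ell=\sum_{\ell\ge g_0}\pi_\ell(\Degree z_cg(z_c))^\ell$, and since $\pi_\ell$ decays only polynomially in $\ell$, the series $\sum_\ell\pi_\ell x^\ell$ has radius of convergence exactly $1$; \eqref{g1} gives $\Degree z_cg(z_c)\to1$ but not $\Degree z_cg(z_c)\le1$, so the loop weight may be infinite, and in any case the factorization of the $j$-loop term into $g(z)$ times a $j$-fold product of small weights is an unproved diagrammatic estimate. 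What you actually need is far less: only $z_c^{(a)}\ge z_0=(\Degree\ee)^{-1}$, since then $z_0\le z_c^{(a)}\le z_c\sim z_0$ squeezes $z_c^{(a)}\sim z_0$ and \eqref{g1} immediately gives $g^{(a)}(z_c^{(a)})\to\ee$, with no loop analysis at all. The paper obtains this by a purely combinatorial argument valid in every dimension: each animal is put in bijection with a ``cut-tree'' by removing half-bonds to break cycles, and the identity \eqref{Astarsum} then yields $a_n\le f_n$, hence $\alpha\le\Degree\ee$ directly. Replacing your loop expansion by such a counting bound (or by Penrose's bound $\alpha\le\Degree^\Degree/(\Degree-1)^{\Degree-1}$) is the missing step.
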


To our knowledge, Theorem~\ref{thm:1} is new for
the nearest-neighbour model.  The proof of
Theorem~\ref{thm:1} is the same for both the
nearest-neighbour and spread-out models.
No bound on the rate of convergence
is obtained here for either \eqref{taualphalim} or \eqref{elim}.
Given \eqref{g1}, the statements $\tau \sim \Degree
\ee$ and $g(z_c) \to \ee$ are equivalent, as are the
statements $\alpha \sim \Degree
\ee$ and $g^{(a)}(z_c^{(a)}) \to \ee$.

Stronger results than \eqref{taualphalim}
have been obtained by Penrose \cite{Penr94} for
the spread-out model using a completely different
method of proof, without restriction to $d>8$ and with the error
estimate
\begin{equation}
\label{Perror}
    \frac{\Degree^\Degree}{(\Degree -1)^{\Degree -1}}
    - O(\Degree^{5/7}\log \Degree)
    \le \tau \le \alpha \le
    \frac{\Degree^\Degree}{(\Degree -1)^{\Degree -1}}
\end{equation}
in all dimensions $d \ge 1$.
Both the right- and left-hand sides of \eqref{Perror} are of course
asymptotic to $\Degree \ee$ as $\Degree \to
\infty$.
When combined with
\eqref{g1}, \eqref{elim} then follows from \eqref{Perror}
for the spread-out model in dimensions $d>8$.

Much stronger results  than \eqref{taualphalim}
have been obtained for the closely
related models of self-avoiding walks and percolation.
Let $c_n$ denote the number of $n$-step self-avoiding walks starting
at the origin.
For nearest-neighbour self-avoiding walks, it was proved in
\cite{HS95} that the connective constant $\mu=\lim_{n\to\infty} c_n^{1/n}$
has an asymptotic expansion $\mu \sim \sum_{i=-1}^\infty m_i (2d)^{-i}$
(as $d \to \infty$),
with $m_i \in \Z$ for all $i$.  The first thirteen coefficients in this
expansion are now known \cite{CLS07}, and it appears likely that
the series $\sum_i m_i x^i$ has radius of convergence equal
to zero.  It may however be Borel summable, and a partial result in
this direction is given in \cite{Grah10}.  Some related results for
nearest-neighbour bond percolation are obtained in \cite{HS95,HS06},
and for spread-out models of percolation and self-avoiding
walks in \cite{HS05y,Penr93,Penr94}.

The behaviour of $\tau$ and $\alpha$ for the nearest-neighbour model,
as $d \to \infty$, has been extensively studied in the physics
literature.  For $\tau$, the expansion
\begin{equation}
\label{taud}
    \tau = \sigma \ee
    \exp
    \left(
    - \frac 12 \frac{1}{\sigma}
    - \frac 83 \frac{1}{\sigma^2}
    - \frac {85}{12} \frac{1}{\sigma^3}
    - \frac {931}{20} \frac{1}{\sigma^4}
    - \frac {2777}{10} \frac{1}{\sigma^5}
    + \cdots
    \right)
    \quad\text{where}\quad \sigma = 2d-1
\end{equation}
is reported in \cite{GP00}, but without a rigorous estimate for the
error term.  This raises the question of whether
there exists an asymptotic expansion for
$\tau$ of the form $\ee \sum_{i=-1}^\infty r_i(2d)^{-i}$,
with $r_i \in {\mathbb Q}$.
For $\alpha$, the series
\begin{align}
\label{alphad}
    \alpha = \sigma \ee
    \exp
    &
    \left(
    - \frac 12 \frac{1}{\sigma}
    - \big(\frac{8}{3}-\frac{1}{2\ee}\big) \frac{1}{\sigma^2}
    - \big(\frac{85}{12}-\frac{1}{4\ee}\big) \frac{1}{\sigma^3}
    - \big(\frac{931}{20}-\frac{139}{48\ee} - \frac{1}{8\ee^2}\big)
    \frac{1}{\sigma^4}
    \right.
    \nonumber \\ &
    \hspace{5mm}
    \left.
    - \big(\frac{2777}{10}+\frac{177}{32\ee} - \frac{29}{12\ee^2}\big)
    \frac{1}{\sigma^5}
    + \cdots
    \right)
\end{align}
was derived in \cite{Harr82,PG95}, again without a rigorous error estimate;
here the role of the transcendental number $\ee$ is more delicate.
Theorem~\ref{thm:1} provides a rigorous confirmation of
the leading terms in \eqref{taud}--\eqref{alphad}.

\section{Main steps in the proof}

We define
\begin{equation}
\label{z0def}
    z_0 = \frac{1}{\Degree \ee}.
\end{equation}
Since $\tau \le \alpha$ by definition, the critical points obey
\begin{equation}
    z_c \ge z_c^{(a)}.
\end{equation}
In fact, the strict inequality $\tau < \alpha$ is known
\cite{Jans00}. The proof of Theorem~\ref{thm:1} uses the following
two ingredients. The content of the first is that $z_c^{(a)} \ge
z_0$, or, equivalently, that $\alpha \le \Degree \ee$.  This fact
is presumably well-known, though we did not find an explicit proof
in the literature.  Klarner \cite{Klar67} proves that for
2-dimensional nearest-neighbour site animals the growth constant
is at most $27/4 = 3^3/2^2$ and Penrose \cite{Penr94} states that
this can be generalised to the upper bound $\alpha \le
\Degree^\Degree/(\Degree - 1)^{\Degree -1} \sim \Degree \ee$ for
bond animals on an arbitrary regular graph. We will provide an
elementary proof that $\alpha \le \Degree \ee$ in
Lemma~\ref{lem:z} below, both to keep self-contained and because
elements of the proof are also useful elsewhere in our approach.

\begin{lemma}
\label{lem:z}
In all dimensions $ d \ge 1$, and for the nearest-neighbour or spread-out models,
\begin{equation}
    z_c \ge z_c^{(a)} \ge z_0 = \frac{1}{\Degree \ee}.
\end{equation}
\end{lemma}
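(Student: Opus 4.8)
\medskip
\noindent\emph{Proof strategy.}\quad
The first inequality $z_c \ge z_c^{(a)}$ is immediate: every lattice tree is a lattice animal, so $t_n \le a_n$ for all $n$, hence $\tau \le \alpha$ and $z_c = \tau^{-1} \ge \alpha^{-1} = z_c^{(a)}$. It thus suffices to prove $z_c^{(a)} \ge z_0$, equivalently $\alpha \le \Degree \ee$; nothing below uses more than $\Degree$-regularity of the graph, so this handles the nearest-neighbour and spread-out cases simultaneously. I would prove $a_n \le P(n)\,(\Degree \ee)^n$ for a fixed polynomial $P$, which gives $\alpha = \lim_{n\to\infty} a_n^{1/n} \le \Degree \ee$. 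The route is to encode each bond animal $A$ that contains the origin and has $n$ bonds by a finite binary word $w(A)$ such that (i) $w(A)$ has length at most $\Degree(n+1)$, (ii) $w(A)$ has exactly $n$ ones, and (iii) $A \mapsto w(A)$ is injective. Granting this, $a_n$ is at most the number of binary words of length at most $\Degree(n+1)$ having exactly $n$ ones, so
\[
 a_n \;\le\; \sum_{L=n}^{\Degree(n+1)} \binom{L}{n}
 \;\le\; \bigl(\Degree(n+1)+1\bigr)\binom{\Degree(n+1)}{n};
\]
since $\binom{m}{n} \le (\ee m/n)^n$ (from $n! \ge (n/\ee)^n$), taking $m = \Degree(n+1)$ gives $\binom{\Degree(n+1)}{n} \le (\Degree\ee)^n(1+1/n)^n \le \ee\,(\Degree\ee)^n$, hence $a_n \le \ee\,\bigl(\Degree(n+1)+1\bigr)(\Degree\ee)^n$, as required.

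The word $w(A)$ is read off a canonical stack-based exploration of $A$ from the origin. Fix once and for all, at every vertex, a linear order on the $\Degree$ oriented edges (ordered pairs of adjacent vertices) leaving that vertex. Initialize a stack with the $\Degree$ oriented edges out of the origin in that order, mark the origin ``reached'', and mark every edge ``unexamined''. Then repeatedly pop an oriented edge $(x,y)$: if $\{x,y\}$ is unexamined, mark it examined, append to $w$ the bit $\mathbf 1$ if $\{x,y\}\in A$ and $\mathbf 0$ otherwise, and, when $\{x,y\}\in A$ and $y$ is not yet reached, mark $y$ reached and push the $\Degree$ oriented edges out of $y$. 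Since $A$ is connected and contains the origin, exactly the vertices of $A$ become reached and exactly the bonds of $A$ get the bit $\mathbf 1$, which is (ii); and since at most $\Degree$ oriented edges are pushed per reached vertex, with at most $n+1$ reached vertices, at most $\Degree(n+1)$ oriented edges are ever popped, which is (i). For (iii), running the identical procedure while reading the bits of a word in order --- consuming the next bit exactly when the encoder would append one, and doing nothing when a popped edge is already examined --- reconstructs $A$ from $w(A)$, because every step is forced by $w$ and the fixed orderings.

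The one point that repays care is the decoding statement (iii): that recording an edge's bit only the first time that edge is popped (and not again when it is popped from its other endpoint) loses no information, and that the tallies in (i)--(ii) involve no double counting. Once that bookkeeping is done the estimate above is routine. Finally, since $z_c \ge z_c^{(a)}$, the bound $\alpha \le \Degree\ee$ also gives $\tau \le \Degree\ee$, so the lattice-tree inequality $z_c \ge z_0$ is subsumed; the exploration encoding, and the $\binom{\Degree n}{n}$-type estimate it feeds, are of the kind that can be reused elsewhere in the analysis.
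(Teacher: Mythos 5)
Your argument is correct, but it takes a genuinely different route from the paper. You bound $a_n$ directly by a canonical stack-based exploration that encodes each $n$-bond animal injectively as a binary word of length at most $\Degree(n+1)$ with exactly $n$ ones, giving $a_n \le \ee\,(\Degree(n+1)+1)(\Degree\ee)^n$ and hence $\alpha\le\Degree\ee$; the bookkeeping (one bit per unoriented edge, recorded only at its first pop; at most $\Degree$ pushes per reached vertex; deterministic decoding) is sound, and the binomial estimate is routine. The paper instead proves the sharper pointwise inequality $a_n\le f_n$ (see \eqref{anfn}), where $f_n=\Degree^n\sum_{T\in\mathcal T_n}\prod_i(\xi_i!)^{-1}$ is the mean-field quantity with $f_n^{1/n}\to\Degree\ee$ by \eqref{fnlim}: each animal is first converted bijectively into a ``cut-tree'' by removing half-bonds to break cycles, and then the identity \eqref{Astarsum}, an adaptation of \eqref{Mnsum} from \cite{BCHS99} proved by induction on generations, shows that the $\prod_i(\xi_i!)^{-1}$-weighted count of mean-field configurations folding onto a given cut-tree equals $1$. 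Your approach is more elementary and self-contained (it is in the spirit of the Klarner--Penrose subgraph-counting bounds quoted before the lemma), and it avoids plane trees and the branching-process apparatus entirely. What the paper's route buys is the clean comparison with $f_n$ and no polynomial prefactor; more importantly, the companion bound $t_n\le f_n$ from \eqref{ft} is exactly the dominating sequence used with dominated convergence in the proof of Proposition~\ref{prop:z0}, which is why the authors keep the mean-field comparison rather than a purely combinatorial bound of your type. So your proof fully establishes the lemma as stated, but it would not by itself supply the domination needed later in the paper.
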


\begin{prop}
\label{prop:z0}
For the nearest-neighbour model, or for the spread-out model in
dimensions $d \ge 1$,
\begin{equation}
    \lim_{\Degree \to \infty}  g(z_0) = \ee.
\end{equation}
\end{prop}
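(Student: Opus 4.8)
The plan is to prove $\lim_{\Degree \to \infty} g(z_0) = \ee$ by establishing matching asymptotic lower and upper bounds on $g(z_0) = \sum_{n=0}^\infty t_n z_0^n$ with $z_0 = 1/(\Degree\ee)$. For the lower bound, I would restrict attention to a convenient family of trees whose count is easy to estimate from below. The natural candidate is the family of ``stars'' or, more efficiently, ordered trees built greedily: starting from the origin, at each step add a new bond to a distinct new vertex. Counting lattice trees on $n$ bonds containing the origin from below by such a construction, one expects roughly $t_n \gtrsim \frac{1}{n!}\Degree(\Degree-1)\cdots(\Degree-n+1)$-type bounds (after correcting for overcounting by the symmetry factor and for self-intersections), so that $t_n z_0^n \gtrsim \frac{1}{n!}(1 - O(n^2/\Degree))$ uniformly for $n$ up to some slowly growing cutoff; summing gives $\liminf_{\Degree\to\infty} g(z_0) \ge \ee$. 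One must be careful that the greedy construction produces a genuine tree (no cycles, all vertices distinct): the probability of a collision among $n$ added vertices is $O(n^2/\Degree)$, which is negligible when $n = o(\sqrt{\Degree})$, and the tail $\sum_{n > \sqrt{\Degree}} 1/n!$ is negligible, so this is harmless.

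For the upper bound, I would use the fact (from Lemma~\ref{lem:z}, already available) that $z_c^{(a)} \ge z_0$, hence $z_c \ge z_0$ as well, so that $g(z_0) \le g(z_c) \le 4$ is finite and the series converges at $z_0$; but more is needed — I need the $\limsup$ to be $\ee$, not just finite. Here the key is a clean combinatorial upper bound $t_n \le c_n^{\rm sym}$ on the number of lattice trees in terms of the number of ``abstract'' rooted tree shapes on $n$ bonds embedded into the lattice: each lattice tree containing the origin is determined by its abstract tree structure together with an embedding, and the number of embeddings of a given abstract tree with $n$ edges is at most $\Degree^n$ (each edge, in a suitable exploration order, has at most $\Degree$ choices for its far endpoint), while the number of abstract rooted trees on $n$ edges is the Catalan-type quantity $\frac{1}{n+1}\binom{2n}{n} \sim 4^n/(n^{3/2}\sqrt\pi)$ — however this naive bound gives $4^n/(\Degree\ee)^n \to 0$ termwise but loses the constant. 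The right bound must account for the tree's symmetry factor; more precisely, one uses that the number of lattice trees equals $\sum (\text{embeddings})/(\text{automorphisms})$ over unlabelled shapes, and the generating function identity for rooted labelled trees, $\sum_n \frac{(n+1)^{n-1}}{n!} x^n$, whose radius of convergence is $1/\ee$. Concretely, I expect the bound $t_n \le \frac{(n+1)^{n-1}}{n!}\Degree^n$ (labelled trees on $n+1$ vertices, one marked as origin, embedded with $\le \Degree$ choices per edge, divided by nothing further since labelling already breaks symmetry), giving $t_n z_0^n \le \frac{(n+1)^{n-1}}{n!\,\ee^n}$, and $\sum_n \frac{(n+1)^{n-1}}{n!\,\ee^n} = ?$ — this is exactly the value at the radius of convergence of the tree function, which evaluates to a finite constant but one must check it equals $\ee$ in the limit only after subtracting the overcounting; the honest statement is $t_n z_0^n \le \frac{1}{n!}(1+o(1))^n$-type with the overcount $\frac{(n+1)^{n-1}}{n^n} \to \ee$ as $n\to\infty$ being precisely the source of the factor $\ee$ in $\tau \sim \Degree\ee$, not in $g(z_0)$.

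\textbf{The main obstacle.} The delicate point is the upper bound: I must show $\limsup_{\Degree\to\infty} \sum_n t_n z_0^n \le \ee$, and the naive per-term bound $t_n \le \frac{(n+1)^{n-1}}{n!}\Degree^n$ gives $\sum_n \frac{(n+1)^{n-1}}{n!\ee^n}$, which is a convergent series whose sum exceeds $\ee$. The resolution is that this crude bound drastically overcounts embeddings for large $n$ (it ignores both the tree's automorphisms and, crucially, the constraint that distinct tree-vertices map to distinct lattice points, which forces a multiplicative correction $\prod_{k}(1 - O(k/\Degree))$ that suppresses the large-$n$ terms). So the correct strategy is a split: for $n \le N(\Degree)$ with $N(\Degree) \to \infty$ slowly (e.g.\ $N = \log\log\Degree$), use $t_n z_0^n \to \frac{1}{n!}$ termwise (this requires showing $t_n \Degree^{-n} \to \frac{1}{n!\ee^{-n}}\cdot\ee^{-n}$... more carefully, $\Degree^n z_0^n = \ee^{-n}$ and $t_n \sim \frac{\Degree^n}{n!}$? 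No: $t_n \sim C\tau^n \sim C(\Degree\ee)^n$, so $t_n z_0^n \sim C$, which does not tend to $\frac1{n!}$ for fixed $n$ large $\Degree$) — I must instead directly estimate $t_n$ for \emph{fixed} $n$ as $\Degree\to\infty$, where one shows $t_n = \frac{\Degree^n}{n!}(1 + O_n(\Degree^{-1}))$ by an inclusion–exclusion on vertex collisions, so $t_n z_0^n = \frac{1}{\ee^n n!}(1+o(1))$... but $\sum_n \frac{1}{\ee^n n!} = \ee^{1/\ee} \ne \ee$. This signals I have the wrong normalization somewhere; the resolution is that the dominant trees are \emph{not} all distinct-vertex embeddings counted with $\Degree^n$ but rather the count is $t_n = \frac{(\text{number of tree shapes})\cdot\Degree^n}{n!}(1+o(1))$ where the number of \emph{rooted} tree shapes on $n$ edges, weighted appropriately, produces $\sum_n (\text{shapes}_n) x^n = $ the tree generating function evaluated so that $g(z_0) = \sum_n \frac{n^{n-1}}{n!}\ee^{-n} \cdot (\text{shape count normalization})$. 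The cleanest correct route, and the one I would pursue, is: show $t_n z_0^n \to \frac{n^{n-1}}{n!}\ee^{-n}$ is \emph{wrong}, and instead that $g(z_0) = \lim$ of a sum whose value is $\ee$ by recognizing $\sum_{n\ge 0}\frac{n^{n-1}}{n!}\ee^{-n}$... I will need to pin down the exact combinatorial identity, most likely $\sum_{n=0}^\infty \frac{(n+1)^{n-1}}{n!} x^n \big|_{x = 1/\ee} = \ee$ after the correct accounting of which this is the generating function — that verification (a classical fact about the tree function $T(x)$ with $T(1/\ee) = 1$, whence $\ee^{T} = \cdots$) is where the transcendental $\ee$ emerges, and pinning it down rigorously, together with uniform control of the error terms to interchange $\lim_\Degree$ with $\sum_n$, is the crux of the argument.
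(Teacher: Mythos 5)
Your overall strategy --- compute the limit of $t_n z_0^n$ for each \emph{fixed} $n$ as $\Degree\to\infty$, then interchange the limit with the sum over $n$ --- is exactly the paper's, but the execution collapses around a claim that is simply false. You assert that $\sum_{n\ge 0}\frac{(n+1)^{n-1}}{n!}\,\ee^{-n}$ ``exceeds $\ee$'' and declare reconciling this your main obstacle. In fact this series equals $\ee$ exactly: writing $T(x)=\sum_{m\ge1}\frac{m^{m-1}}{m!}x^m$ for the tree function, one has $\sum_{n\ge0}\frac{(n+1)^{n-1}}{n!}x^n=T(x)/x$ and $T(1/\ee)=1$, so the value at $x=1/\ee$ is $\ee$. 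Correspondingly, the correct termwise limit is not $t_n\Degree^{-n}\to 1/n!$ (which would indeed give the wrong answer $\ee^{1/\ee}$) but $t_n\Degree^{-n}\to\sum_{T\in{\mathcal T}_n}\prod_{i\in T}1/\xi_i!=(n+1)^n/(n+1)!=\frac{(n+1)^{n-1}}{n!}$ --- the normalized count of rooted labelled trees on $n+1$ vertices, which you keep circling but never commit to. With this constant in place there is no overcounting to correct and no delicate cancellation: the limits $\frac{(n+1)^{n-1}}{n!}\ee^{-n}$ sum to $\ee$ on the nose. Your worry about suppression of large-$n$ terms is a red herring here, since the limit is taken at fixed $n$ first.

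The two steps you leave unproved are precisely where the paper's one combinatorial input does all the work: the identity (from Borgs--Chayes--van der Hofstad--Slade) that for every $n$-bond lattice tree $L$, $\sum_{(T,\varphi):\varphi(T)=L}\prod_{i\in T}1/\xi_i!=1$, where the sum is over rooted plane trees $T$ with $n$ edges and maps $\varphi$ of $T$ onto $L$. Summing over $L$ gives the uniform-in-$\Degree$ domination $t_n\le\Degree^n\sum_{T\in{\mathcal T}_n}\prod_i 1/\xi_i!$, hence $t_nz_0^n\le\frac{(n+1)^{n-1}}{n!}\ee^{-n}$, which is summable with sum $\ee$; this is what licenses the interchange by dominated convergence (your cutoff $N(\Degree)$ would also work, but you never supply the tail bound it needs). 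The same identity, combined with the elementary lower bound that each plane tree admits at least $\Degree(\Degree-1)\cdots(\Degree-n+1)$ collision-free embeddings (your greedy construction, applied per tree shape and weighted by $\prod_i 1/\xi_i!$), sandwiches $t_n\Degree^{-n}$ and yields the termwise limit. Without that identity, or an equivalent exact accounting of how many plane-tree/embedding pairs project to a given lattice tree, neither the upper bound nor the termwise limit is established, and the proposal does not close.
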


\begin{proof}[Proof of Theorem~\ref{thm:1}]
We will prove that,
under the hypotheses of
Theorem~\ref{thm:1},
\begin{equation}
\label{ge}
    \lim_{\Degree \to \infty}  g(z_c) = \ee.
\end{equation}
It then follows from \eqref{g1}
that $z_c \sim z_0$.  Lemma~\ref{lem:z} then
implies that $z_c^{(a)}\sim z_0$, and finally
\eqref{g1} implies that $\lim_{\Degree \to \infty} g(z_c^{(a)})=\ee$.
Thus Theorem~\ref{thm:1} will
follow, once we prove \eqref{ge}.
By Proposition~\ref{prop:z0} and \eqref{g1},
\begin{equation}
    \lim_{\Degree \to \infty} (\Degree z_cg(z_c)-\ee^{-1} g(z_0)) =0.
\end{equation}
This can be rewritten as
\begin{equation}
    \lim_{\Degree \to \infty} [\Degree (z_c-z_0)g(z_c)+
    \ee^{-1}(g(z_c)-g(z_0))] =0.
\end{equation}
By Lemma~\ref{lem:z} and the monotonicity of $g$, both terms in the limit
are non-negative, and therefore
\begin{equation}
    \lim_{\Degree \to \infty} (g(z_c)-g(z_0)) = 0.
\end{equation}
With Proposition~\ref{prop:z0}, this gives
\eqref{ge} and completes the proof.
\end{proof}

It remains to prove Lemma~\ref{lem:z}
and Proposition~\ref{prop:z0}.

\section{The proof completed}

We will make use of
the following mean-field model
(see \cite{BCHS99,Slad06}), which is related
to the Galton--Watson
branching process with critical Poisson offspring distribution.
In other developments,
the connection with the mean-field model is reflected
by the super-Brownian scaling limits proved
for lattice trees in high dimensions \cite{DS98,Holm08}.

Let ${\mathcal T}_n$ denote the set of $n$-edge rooted plane trees
\cite{Stan97}, and let ${\mathcal T}= \cup_{n=0}^\infty {\mathcal T}_n$.
Given $T \in {\mathcal T}$, we consider mappings
$\varphi : T \to \Zd$ with the property that $\varphi$ maps the root to
the origin, and maps each other vertex of $T$ to a neighbour of its parent
(nearest-neighbour or spread-out, depending on the setting);
the set of such mappings is denoted $\Phi(T)$.
There is no self-avoidance constraint.
By definition, for $T \in {\mathcal T}_n$,
the cardinality of $\Phi(T)$ is $K^n$.
We may interpret the image of $T$ under
$\varphi$ as a multigraph without self-lines,
and we refer to the pair $(T,\varphi)$ as a \emph{mean-field} configuration.
The set of all mean-field configurations $(T,\varphi)$ with
$T \in {\mathcal T}_n$ is denoted ${\mathcal M}_n$.

Let $\xi_i$ denote the forward degree of a vertex $i \in T$; this is
the degree of the root when $i$ is the root of $T$
and otherwise it is the
degree of $i$ minus $1$.  For $n \ge 0$, let
\begin{equation}
\label{fdef}
    f_n = \sum_{(T,\varphi) \in {\mathcal M}_n }
    \prod_{i\in T}\frac{1}{\xi_i!}
    =
    K^n
    \sum_{T  \in {\mathcal T}_n }
    \prod_{i\in T}\frac{1}{\xi_i!},
\end{equation}
where the second equality follows from the fact that
$\Phi(T)$ has $K^n$ elements.
Let $|T|$ denote the number of edges in $T$. Then
\begin{equation}
\label{fsum}
    \sum_{n=0}^\infty f_n  z^{n}
    =
    (K z)^{-1}\sum_{T \in {\mathcal T}}
    (\Degree\ee z)^{|T|+1} \prod_{i\in T}\frac{1}{\ee\xi_i!}.
\end{equation}
Moreover, since
${\mathbb P}(T)=\prod_{i\in T}(\ee\xi_i!)^{-1}$ is the probability
that $T$ arises as the family tree of a  Galton--Watson
branching process with critical Poisson offspring distribution,
it follows from \eqref{fsum} that
\begin{equation}
\label{f1}
    \sum_{n=0}^\infty f_n z_0^n
    =
    \ee
    \sum_{T \in {\mathcal T}}{\mathbb P}(T)
    =\ee.
\end{equation}
The relation with the critical Poisson branching process
can easily be exploited further
(see, e.g., \cite[Theorem~2.1]{BCHS99}) to
obtain
\begin{equation}
    \sum_{n=0}^\infty f_n  z^{n}
    =
    (K  z)^{-1}\sum_{n=1}^\infty
    \frac{n^{n-1}}{n!}
    (\Degree  z)^{n} .
\end{equation}
The series on the right-hand side converges if and only if
$|\Degree\ee z | \le 1$, by Stirling's
formula, and hence
\begin{equation}
\label{fnlim}
    \lim_{n \to \infty} f_n^{1/n} = \Degree \ee = \frac{1}{z_0}.
\end{equation}

Let ${\mathcal L}_n$ denote the set of $n$-bond lattice trees containing
the origin; its cardinality is $t_n$.  We
will use the fact, proved in \cite[(5.5)]{BCHS99},
that for every $L \in {\mathcal L}_n$,
\begin{equation}
\label{Mnsum}
    \sum_{(T,\varphi) \in {\mathcal M}_n : \varphi (T)=L}
    \prod_{i\in T}\frac{1}{\xi_i!}
    =1.
\end{equation}
The proof of \eqref{Mnsum}
in \cite{BCHS99} is given for the nearest-neighbour model,
but it applies without change also to the spread-out model.
By summing \eqref{Mnsum} over $L \in {\mathcal L}_n$,
we obtain
\begin{equation}
\label{ft}
    t_n  \le f_n,
\end{equation}
and hence $\tau \le \lim_{n \to \infty}f_n^{1/n}
=\Degree \ee$.  This gives the inequality $z_c \ge z_0$, which
is weaker than the inequality $z_c^{(a)} \ge z_0$ that we seek
in Lemma~\ref{lem:z}.

\begin{proof}[Proof of Lemma~\ref{lem:z}]
The inequality $z_c \ge z_c^{(a)}$ follows from $t_n \le a_n$,
and the equality
$z_0=(\Degree\ee)^{-1}$ holds by definition, so it suffices to prove
that $z_c^{(a)} \ge z_0$.  By \eqref{fnlim}, for this it suffices
to prove that
\begin{equation}
\label{anfn}
    a_n \le f_n.
\end{equation}
To prove this, we adapt the proof of \eqref{Mnsum} from \cite{BCHS99}.

The first step involves a unique determination of a tree
structure within a lattice animal.  For this, we
order all bonds in the infinite lattice
lexicographically.  Also, we regard a bond
as an arc joining the vertices of its endpoints, and we order
the two halves of this arc as minimal and maximal.  These orderings
are fixed once and for all.
Given a lattice animal $A$, suppose that it contains $c$ cycles.
Choose the minimal bond whose removal would break a cycle,
and remove its minimal half from the animal.  Repeat this until no cycles
remain.  The result is a kind of lattice tree, which we will call
the \emph{cut-tree} $A^*$,
in which $c$ leaves are endpoints of half edges.
See Figure~\ref{fig:cut-tree}.
Let ${\mathcal A}_n$ denote the set of $n$-bond lattice animals
that contain the origin.
Let ${\mathcal A}^*_n$ denote the set of $n$-bond cut-trees
that can be produced from a lattice animal in ${\mathcal A}_n$
by this procedure.
By construction, lattice animals and cut-trees are in one-to-one
correspondence, so ${\mathcal A}_n^*$ has cardinality $a_n$.

\begin{figure}
\begin{center}
\setlength{\unitlength}{0.0125in}%
\begin{picture}(300,127)(60,680)
\thicklines
\put( 80,760){\line( 1, 0){ 20}}
\put(100,760){\line( 0,-1){ 20}}
\put(100,740){\line( 0,-1){ 20}}
\put(100,720){\line( 1, 0){ 20}}
\put(120,720){\line( 1, 0){ 20}}
\put(140,720){\line( 1, 0){ 20}}
\put(160,720){\line( 0, 1){ 20}}
\put(160,740){\line( 1, 0){ 20}}
\put( 80,780){\line( 0,-1){ 60}}
\put( 80,720){\line( 1, 0){ 20}}
\put(100,760){\line( 1, 0){ 20}}
\put(100,740){\line( 1, 0){ 40}}
\put(120,740){\line( 0,-1){ 20}}
\put(120,760){\line( 1, 0){ 20}}
\put(160,720){\line( 0,-1){ 20}}
\put( 60,760){\line( 1, 0){ 20}}
\put( 60,760){\line( 0,-1){ 20}}
\put( 60,740){\line(-1, 0){ 20}}
\put(120,760){\line( 0, 1){ 40}}
\put(100,720){\line( 0,-1){ 20}}
\put(140,740){\line( 0, 1){ 20}}
\put(140,720){\line( 0,-1){ 40}}
\put(140,700){\line( 1, 0){ 20}}
\put(160,760){\line( 0,-1){ 20}}
\put(160,760){\line( 1, 0){ 40}}
\put( 50,720){\makebox(0,0)[lb]{\raisebox{0pt}[0pt][0pt]{$A$}}}
\put(260,760){\line( 1, 0){ 20}}
\put(280,760){\line( 0,-1){ 10}}
\put(280,740){\line( 0,-1){ 10}}
\put(280,720){\line( 1, 0){ 20}}
\put(300,720){\line( 1, 0){ 20}}
\put(320,720){\line( 1, 0){ 20}}
\put(340,720){\line( 0, 1){ 20}}
\put(340,740){\line( 1, 0){ 20}}
\put(260,780){\line( 0,-1){ 50}}
\put(260,720){\line( 1, 0){ 20}}
\put(280,760){\line( 1, 0){ 20}}
\put(280,740){\line( 1, 0){ 40}}
\put(300,740){\line( 0,-1){ 20}}
\put(300,760){\line( 1, 0){ 20}}
\put(340,720){\line( 0,-1){ 20}}
\put(240,760){\line( 1, 0){ 20}}
\put(240,760){\line( 0,-1){ 20}}
\put(240,740){\line(-1, 0){ 20}}
\put(300,760){\line( 0, 1){ 40}}
\put(280,720){\line( 0,-1){ 20}}
\put(320,740){\line( 0, 1){ 20}}
\put(320,720){\line( 0,-1){ 10}}
\put(320,700){\line( 0,-1){ 20}}
\put(320,700){\line( 1, 0){ 20}}
\put(340,760){\line( 0,-1){ 20}}
\put(340,760){\line( 1, 0){ 40}}
\put(230,720){\makebox(0,0)[lb]{\raisebox{0pt}[0pt][0pt]{$A^*$}}}
\end{picture}
\end{center}
\caption{\label{fig:cut-tree}
A lattice animal $A$ and its associated cut-tree $A^*$.}
\end{figure}
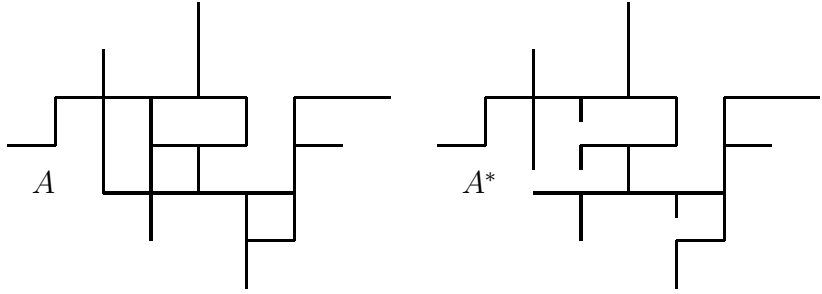

We may regard the edges of $T \in {\mathcal T}$ as directed away from
the root, and we write a directed edge as $(i,i')$.
Given $A^*\in {\mathcal A}_n^*$ and $(T,\varphi)
\in {\mathcal M}_n$, we say that $\varphi(T)=A^*$ if (i) each bond
in $A$ is the image of a unique edge in $T$ under $\varphi$,
and if, in addition, (ii) if $(b^+,b^-)$ is a directed bond in $A$ from which
the half-bond containing $b^-$ is removed in $A^*$, and if the edge
of $T$ that is mapped by $\varphi$ to $(b^+,b^-)$ is $(i,i')$, then
$i'$ is a leaf of $T$.
Roughly speaking, the
condition $\varphi(T)=A^*$ means that the mapping $\varphi$ ``folds''
$T$ over $A^*$ in such a way that the tree structure of $T$ is
preserved in $A^*$.
We claim that for every
$A^* \in {\mathcal A}^*_n$,
\begin{equation}
\label{Astarsum}
    \sum_{(T,\varphi) \in {\mathcal M}_n : \varphi (T)=A^*}
    \prod_{i\in T}\frac{1}{\xi_i!}
    =1.
\end{equation}
This implies that
\begin{align}
    a_n & =
    \sum_{(T,\varphi) \in {\mathcal M}_n : \varphi (T)\in {\mathcal A}^*_n}
    \prod_{i\in T}\frac{1}{\xi_i!}
    \le
    \sum_{(T,\varphi) \in {\mathcal M}_n }
    \prod_{i\in T}\frac{1}{\xi_i!}
    = f_n,
\end{align}
which is the
required inequality \eqref{anfn}.  Thus it suffices to prove \eqref{Astarsum}.

To prove \eqref{Astarsum}, we adapt the proof of \eqref{Mnsum}
from \cite{BCHS99}, as follows.
Let $b_0$ be the degree of $0$ in $A^*$,
and given a nonzero vertex $x \in A^*$, let $b_x$
be the degree of $x$ in $A^*$ minus $1$ (the forward degree of $x$).
Then the set $\{ b_x : x \in A^*\}$ (with repetitions) must be equal to the
set $\{\xi_i : i \in T\}$ (with repetitions) for any $T$ such that $\varphi(T)=A^*$.
Defining $\nu(A^*)$ to be the cardinality
of  $\{(T,\varphi) : \varphi(T)=A^* \}$, \eqref{Astarsum}
is therefore equivalent to
\begin{equation}
\label{wsaltsuff}
    \nu(A^*)  = \prod_{x \in A^*} b_x!.
\end{equation}

We prove \eqref{wsaltsuff} by induction on the number $N$ of generations
of $A^*$, i.e., the number of bonds or half-bonds in
the longest self-avoiding path
in $A^*$ starting from the origin.  The identity \eqref{wsaltsuff} clearly
holds if $N=0$.  Our induction hypothesis
is that \eqref{wsaltsuff} holds if there are $N-1$ or fewer generations.
Suppose $A^*$ has $N$ generations, and let $A^*_1, \ldots, A^*_{b_0}$ denote
the cut-trees resulting by deleting from $A^*$ the origin and
all bonds and half-bonds incident on the origin.
We regard each $A^*_a$ as rooted at the non-zero vertex
in the corresponding deleted bond.  It suffices to show
that $\nu(A^*) = b_0! \prod_{a=1}^{b_0} \nu(A^*_a)$, since each $A^*_a$ has fewer
than $N$ generations.

To prove this, we note that each pair $(T,\varphi)$
with $\varphi(T)=A^*$ induces a set of $(T_a,\varphi_a)$ such that
$\varphi_a(T_a)=A^*_a$.  This correspondence is
$b_0!$ to $1$, since $(T,\varphi)$
is determined by the set of $(T_a,\varphi_a)$, up to
permutation of the branches of $T$ at its root.
This proves
$\nu(A^*) = b_0! \prod_{a=1}^{b_0} \nu(A^*_a)$, and completes the proof
of the lemma.
\end{proof}

\begin{lemma}
\label{lem:n}
For the nearest-neighbour or spread-out models (the latter in all
dimensions $d \ge 1$), for each fixed $n \ge 0$,
\begin{equation}
\lim_{\Degree\to\infty} \frac{t_n}{\Degree^{n}}
= \sum_{T\in {\mathcal T}_n}\prod_{i\in T}\frac{1}{\xi_i!}.
\end{equation}
\end{lemma}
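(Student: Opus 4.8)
The plan is to prove Lemma~\ref{lem:n} by showing that, for fixed $n$, the ratio $t_n/K^n$ is squeezed between two quantities that both converge to $\sum_{T \in \mathcal{T}_n} \prod_{i \in T} (\xi_i!)^{-1}$ as $K \to \infty$. From \eqref{ft} and \eqref{fdef} we already have the upper bound
\begin{equation}
\frac{t_n}{K^n} \le \frac{f_n}{K^n} = \sum_{T \in \mathcal{T}_n} \prod_{i \in T} \frac{1}{\xi_i!},
\end{equation}
and the right-hand side is a constant independent of $K$ (and of $d$, $L$), so no limit is needed there. Thus the whole content of the lemma is the matching lower bound: I must show that
\begin{equation}
\liminf_{K \to \infty} \frac{t_n}{K^n} \ge \sum_{T \in \mathcal{T}_n} \prod_{i \in T} \frac{1}{\xi_i!}.
\end{equation}

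The natural way to get a lower bound on $t_n$ is to count, for a well-chosen family of mean-field configurations $(T,\varphi) \in \mathcal{M}_n$, the \emph{distinct} lattice trees they produce, and to compare this count with the weighted sum $f_n$. The key is the following asymptotic genericity statement: in high degree, a ``typical'' mapping $\varphi \in \Phi(T)$ is self-avoiding, so that $\varphi(T)$ is a genuine $n$-bond lattice tree, and moreover distinct $(T,\varphi)$ with self-avoiding $\varphi$ that are not related by a root-branch permutation give distinct lattice trees with distinct induced tree structures. Concretely, for a fixed plane tree $T \in \mathcal{T}_n$, the number of $\varphi \in \Phi(T)$ is exactly $K^n$, and the number of those that fail to be self-avoiding is $O(K^{n-1})$: a collision forces two of the at most $n+1$ images to coincide, and fixing such a coincidence removes (at least) one free choice, costing a factor $1/K$; there are only finitely many (bounded by a function of $n$ alone) such collision patterns. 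Hence the number of self-avoiding $\varphi \in \Phi(T)$ is $K^n(1 + O(K^{-1}))$, where the implied constant depends only on $n$.

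Next I would assemble these into a count of lattice trees. Recall from \eqref{Mnsum} that each $L \in \mathcal{L}_n$ receives total weight $1$ from the configurations mapping onto it; in particular the number of $(T,\varphi) \in \mathcal{M}_n$ with $\varphi(T) = L$ and $\varphi$ self-avoiding is at most (in fact exactly, when $\varphi$ is self-avoiding the weight $\prod 1/\xi_i!$ counting is cleanest to state via) a bounded multiplicity: when $\varphi$ is injective, $\varphi(T) = L$ forces $T$ to be isomorphic (as an abstract tree) to $L$, and the only ambiguity is the choice of plane-tree structure, i.e.\ an ordering of the branches at each vertex, which contributes exactly $\prod_{i \in T}\xi_i!$ configurations — precisely the reciprocal of the weight, so the weighted count of injective configurations over a given $L$ is at most $1$. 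Summing the self-avoiding configurations over all $T \in \mathcal{T}_n$ and using that each lands on some $L \in \mathcal{L}_n$ with weighted multiplicity $\le 1$ gives
\begin{equation}
t_n \ge \sum_{L \in \mathcal{L}_n} \sum_{\substack{(T,\varphi) \in \mathcal{M}_n:\ \varphi(T)=L\\ \varphi \text{ self-avoiding}}} \prod_{i \in T} \frac{1}{\xi_i!} = \sum_{\substack{(T,\varphi) \in \mathcal{M}_n\\ \varphi \text{ self-avoiding}}} \prod_{i \in T} \frac{1}{\xi_i!} = \sum_{T \in \mathcal{T}_n} K^{-n} \#\{\varphi \in \Phi(T) \text{ self-avoiding}\} \prod_{i \in T} \frac{1}{\xi_i!} \cdot K^n \Big/ K^n,
\end{equation}
and dividing by $K^n$ and inserting the estimate $\#\{\varphi \text{ self-avoiding}\} = K^n(1+O(K^{-1}))$ yields
\begin{equation}
\frac{t_n}{K^n} \ge \big(1 + O(K^{-1})\big) \sum_{T \in \mathcal{T}_n} \prod_{i \in T} \frac{1}{\xi_i!}.
\end{equation}
Letting $K \to \infty$ and combining with the upper bound finishes the proof.

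The main obstacle is the bookkeeping in the second and third steps: one must verify carefully that for an injective $\varphi$ the fibre $\{(T,\varphi):\varphi(T)=L\}$ is exactly the set of plane-tree refinements of the abstract tree $L$ and carries total $\prod 1/\xi_i!$-weight equal to $1$ (this is the ``self-avoiding case'' of \eqref{Mnsum}, and is where the factor $\prod \xi_i!$ cancels), and that the non-self-avoiding configurations are genuinely negligible, i.e.\ the count of $\varphi \in \Phi(T)$ with a repeated image is $O(K^{n-1})$ uniformly in the (finitely many) trees $T \in \mathcal{T}_n$. Both the nearest-neighbour and spread-out cases are handled identically, since only the degree $K$ and the fact that $\Phi(T)$ has $K^n$ elements are used; the spread-out geometry plays no role.
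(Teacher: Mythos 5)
Your proof is correct and takes essentially the same route as the paper's: both arguments squeeze $t_n/\Degree^n$ between the mean-field upper bound coming from \eqref{Mnsum} and a lower bound obtained by counting the self-avoiding embeddings of each plane tree $T\in{\mathcal T}_n$, which number $\Degree^n(1+O(\Degree^{-1}))$. The only cosmetic difference is that the paper bounds the self-avoiding maps from below directly by $\Degree(\Degree-1)\cdots(\Degree-n+1)$ via a sequential construction, whereas you bound the complement by $O(\Degree^{n-1})$ via collision patterns.
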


\begin{proof}
By \eqref{Mnsum},
\begin{align}
    t_n & =
    \sum_{(T,\varphi) \in {\mathcal M}_n : \varphi (T)\in {\mathcal L}_n}
    \prod_{i\in T}\frac{1}{\xi_i!}
    = \sum_{T \in {\mathcal T}_n}
    \prod_{i\in T}\frac{1}{\xi_i!}
    \sum_{\varphi \in \Phi(T) : \varphi (T)\in {\mathcal L}_n} 1.
\label{tnsum}
\end{align}
Given $T \in {\mathcal T}_n$, the cardinality of
$\Phi(T)$ is $\Degree^n$, so there are at most $\Degree^n$ nonzero terms
in the above sum over $\varphi$.  On the other hand, there are at least
$\Degree (\Degree - 1)\cdots(\Degree -n+1)$ nonzero terms.
To see this, consider the mapping $\varphi$ of $T$ to proceed in
a connected fashion to map the edges of $T$ one by one to bonds in
$\Z^d$, starting from the root.  The first edge of $T$ can be mapped to
any one of $K$ possible bonds.  The second edge of $T$ includes one
of the vertices of the first edge, and to avoid the image of the other
vertex of the first edge, it can be mapped to any one of $K-1$ possible
edges.  In this way, as
$\varphi$ proceeds from the root
to map vertices of $T$ into $\Zd$, the restriction that the image contain
$n+1$ distinct vertices
allows $K$ choices for the first bond, $K-1$ choices for
the second bond, at least $K-2$ for the third, at least $K-3$ for
the fourth, and so on.
This implies that
\begin{equation}
    \Degree (\Degree - 1)\cdots(\Degree -n+1)
    \sum_{T \in {\mathcal T}_n}
    \prod_{i\in T}\frac{1}{\xi_i!}
    \le
    t_n
    \le
    \Degree^n
    \sum_{T \in {\mathcal T}_n}
    \prod_{i\in T}\frac{1}{\xi_i!},
\end{equation}
and the desired conclusion follows.
\end{proof}

\begin{proof}[Proof of Proposition~\ref{prop:z0}.]
By \eqref{ft}, \eqref{fdef}
and \eqref{z0def},
\begin{equation}
    t_nz_0^n \le f_nz_0^n
    =
    \ee^{-n} \sum_{T\in {\mathcal T}_n}\prod_{i\in T}\frac{1}{\xi_i!},
\end{equation}
which is independent of $K$.
Also, by \eqref{f1},
$\sum_{n=0}^\infty f_nz_0^n=\ee$.
Hence, by
Lemma~\ref{lem:n} and the dominated convergence theorem, we have
\begin{align}
\lim_{\Degree \to\infty}g(z_0)
&= \sum_{n=0}^{\infty}
\lim_{\Degree\to\infty}t_n z_0^n%
        =\sum_{n=0}^{\infty}
        \left( \sum_{T\in {\mathcal T}_n}\prod_{i\in T}\frac{1}{\xi_i!}
        \right) \ee^{-n}
        =
        \sum_{n=0}^{\infty}f_n z_0^n
        = \ee .
\end{align}
\end{proof}

\section*{Acknowledgements}

The work of YMM was supported in part by CONACYT of Mexico.
The work of GS was supported in part by NSERC of
Canada.


\begin{thebibliography}{10}

\bibitem{BCHS99}
C.~Borgs, J.T. Chayes, R.~van~der Hofstad, and G.~Slade.
\newblock Mean-field lattice trees.
\newblock {\em Ann. Combinatorics}, {\bf 3}:205--221, (1999).
\newblock \href{http://www.ams.org/mathscinet-getitem?mr=MR1772346}{MR1772346}

\bibitem{CLS07}
N.~Clisby, R.~Liang, and G.~Slade.
\newblock Self-avoiding walk enumeration via the lace expansion.
\newblock {\em J.\ Phys. A: Math.\ Theor.}, {\bf 40}:10973--11017, (2007).
\newblock \href{http://www.ams.org/mathscinet-getitem?mr=MR2396212}{MR2396212}

\bibitem{DS98}
E.~Derbez and G.~Slade.
\newblock The scaling limit of lattice trees in high dimensions.
\newblock {\em Commun.\ Math.\ Phys.}, {\bf 193}:69--104, (1998).
\newblock \href{http://www.ams.org/mathscinet-getitem?mr=MR1620301}{MR1620301}

\bibitem{GP00}
D.S. Gaunt and P.J. Peard.
\newblock $1/d$-expansions for the free energy of weakly embedded site animal
  models of branched polymers.
\newblock {\em J. Phys. A: Math. Gen.}, {\bf 33}:7515--7539, (2000).
\newblock \href{http://www.ams.org/mathscinet-getitem?mr=MR1802107}{MR1802107}

\bibitem{Grah10}
B.T. Graham.
\newblock Borel-type bounds for the self-avoiding walk connective constant.
\newblock {\em J. Phys. A: Math. Theor.}, {\bf 43}:235001, (2010).
\newblock \href{http://www.ams.org/mathscinet-getitem?mr=MR2646672}{MR2646672}

\bibitem{Hara08}
T.~Hara.
\newblock Decay of correlations in nearest-neighbor self-avoiding walk,
  percolation, lattice trees and animals.
\newblock {\em Ann. Probab.}, {\bf 36}:530--593, (2008).
\newblock \href{http://www.ams.org/mathscinet-getitem?mr=MR2393990}{MR2393990}

\bibitem{HS90b}
T.~Hara and G.~Slade.
\newblock On the upper critical dimension of lattice trees and lattice animals.
\newblock {\em J. Stat. Phys.}, {\bf 59}:1469--1510, (1990).
\newblock \href{http://www.ams.org/mathscinet-getitem?mr=MR1063208}{MR1063208}

\bibitem{HS95}
T.~Hara and G.~Slade.
\newblock The self-avoiding-walk and percolation critical points in high
  dimensions.
\newblock {\em Combin. Probab. Comput.}, {\bf 4}:197--215, (1995).
\newblock \href{http://www.ams.org/mathscinet-getitem?mr=MR1356575}{MR1356575}

\bibitem{Harr82}
A.B. Harris.
\newblock Renormalized $(1/\sigma)$ expansion for lattice trees and
  localization.
\newblock {\em Phys. Rev. B}, {\bf 26}:337--366, (1982).
\newblock \href{http://www.ams.org/mathscinet-getitem?mr=MR0668821}{MR0668821}

\bibitem{HS05y}
R.~van~der Hofstad and A.~Sakai.
\newblock Critical points for spread-out self-avoiding walk, percolation and
  the contact process.
\newblock {\em Probab. Theory Related Fields}, {\bf 132}:438--470, (2005).
\newblock \href{http://www.ams.org/mathscinet-getitem?mr=MR2197108}{MR2197108}

\bibitem{HS06}
R.~van~der Hofstad and G.~Slade.
\newblock Expansion in $n^{-1}$ for percolation critical values on the $n$-cube
  and ${Z}^n$: the first three terms.
\newblock {\em Combin.\ Probab.\ Comput.}, {\bf 15}:695--713, (2006).
\newblock \href{http://www.ams.org/mathscinet-getitem?mr=MR2248322}{MR2248322}

\bibitem{Holm08}
M.~Holmes.
\newblock Convergence of lattice trees to super-{B}rownian motion above the
  critical dimension.
\newblock {\em Electr.\ J.\ Probab.}, {\bf 13}:671--755, (2008).
\newblock \href{http://www.ams.org/mathscinet-getitem?mr=MR2399294}{MR2399294}

\bibitem{Jans00}
E.~J. Janse~van Rensburg.
\newblock {\em The Statistical Mechanics of Interacting Walks, Polygons,
  Animals and Vesicles}.
\newblock Oxford University Press, Oxford, (2000).
\newblock \href{http://www.ams.org/mathscinet-getitem?mr=MR1858028}{MR1858028}

\bibitem{Klar67}
D.A. Klarner.
\newblock Cell growth problems.
\newblock {\em Canad. J. Math.}, {\bf 19}:851--863, (1967).
\newblock \href{http://www.ams.org/mathscinet-getitem?mr=MR0214489}{MR0214489}

\bibitem{Klei81}
D.J. Klein.
\newblock Rigorous results for branched polymer models with excluded volume.
\newblock {\em J. Chem. Phys.}, {\bf 75}:5186--5189, (1981).

\bibitem{PG95}
P.J. Peard and D.S. Gaunt.
\newblock $1/d$-expansions for the free energy of lattice animal models of a
  self-interacting branched polymer.
\newblock {\em J. Phys. A: Math. Gen.}, {\bf 28}:6109--6124, (1995).
\newblock \href{http://www.ams.org/mathscinet-getitem?mr=MR1364786}{MR1364786}

\bibitem{Penr93}
M.D. Penrose.
\newblock On the spread-out limit for bond and continuum percolation.
\newblock {\em Ann.\ Appl.\ Probab.}, {\bf 3}:253--276, (1993).
\newblock \href{http://www.ams.org/mathscinet-getitem?mr=MR1202526}{MR1202526}

\bibitem{Penr94}
M.D. Penrose.
\newblock Self-avoiding walks and trees in spread-out lattices.
\newblock {\em J. Stat. Phys.}, {\bf 77}:3--15, (1994).
\newblock \href{http://www.ams.org/mathscinet-getitem?mr=MR1300525}{MR1300525}

\bibitem{Slad06}
G.~Slade.
\newblock {\em The Lace Expansion and its Applications.}
\newblock Springer, Berlin, (2006).
\newblock Lecture Notes in Mathematics Vol. 1879. Ecole d'Et\'{e} de
  Probabilit\'{e}s de Saint--Flour XXXIV--2004.
\newblock \href{http://www.ams.org/mathscinet-getitem?mr=MR2239599}{MR2239599}

\bibitem{Stan97}
R.P. Stanley.
\newblock {\em Enumerative Combinatorics}, volume~1.
\newblock Cambridge University Press, Cambridge, (1997).
\newblock \href{http://www.ams.org/mathscinet-getitem?mr=MR1442260}{MR1442260}

\end{thebibliography}
\end{document}